\documentclass[11pt,reqno]{amsart}

\usepackage{graphicx}

\usepackage{appendix}

\usepackage{mleftright}

\usepackage{oplotsymbl}

\usepackage{wasysym}


\DeclareMathAlphabet{\mathpzc}{OT1}{pzc}{m}{it}
\DeclareMathAlphabet{\mathantt}{OT1}{antt}{li}{it}

\usepackage{comment}
\usepackage{enumerate}
\usepackage{esvect}
\usepackage{bm}
\usepackage{amsmath}
\usepackage{amssymb}
\usepackage{mathtools}

\usepackage{mathrsfs}
\usepackage{color}

\usepackage[normalem]{ulem}

\usepackage{url}

\newtheorem{theorem}{Theorem}[section]

\newtheorem{cor}[theorem]{Corollary}
\newtheorem{lemma}[theorem]{Lemma}

\newtheorem{ex}[theorem]{Example}

\usepackage{bbm}

\DeclareSymbolFont{bbold}{U}{bbold}{m}{n}
\DeclareSymbolFontAlphabet{\mathbbold}{bbold}
\DeclareMathOperator{\sgn}{sgn}

\usepackage{stmaryrd}

\usepackage{hyperref}
\hypersetup{hidelinks,colorlinks}

\hypersetup{
    linkcolor=red,
    citecolor=blue,
    urlcolor=black,
 }

\usepackage{dsfont}

\usepackage{mleftright} 

\title{On Digit-Appending Walks Along Linear Recurrences}

\author[A. Mosahab]{Arya Mosahab}
\address{Department of Mathematics, University of Manchester}
\email{arya.mosahab@student.manchester.ac.uk}

\author[S. Boyyakar]{Sahitya Boyyakar}
\address{Malla Reddy Deemed To Be University}
\email{\href{mailto:23j41a05d4@mallareddyuniversity.ac.in}{\texttt{23j41a05d4@mallareddyuniversity.ac.in}}}

\author[G. Bruda]{Glenn Bruda}
\address{School of Mathematics, Georgia Institute of Technology}
\email{gbruda3@gatech.edu}

\author[S. D. Kominers]{Scott Duke Kominers}
\address{Harvard Business School; Department of Economics and Center of Mathematical Sciences and Applications, Harvard University; and a16z crypto}
\email{kominers@fas.harvard.edu}

\author[S. J. Miller]{Steven J. Miller}
\address{Department of Mathematics, Williams College}
\email{sjm1@williams.edu}

\allowdisplaybreaks

\begin{document}

\begin{abstract}
We study digit-appending walks in base $b\geq 2$ along integer linear recurrences with minimal characteristic polynomials having a unique simple dominant root $\alpha$ that lies outside the unit circle. We prove that if $\alpha$ and $b$ are multiplicatively independent, then no infinite walk can have uniformly bounded appended digit lengths. In contrast, we show that when the appended digit length can be unbounded, the same independence hypothesis implies the existence of infinite walks.  
\end{abstract}

\maketitle

\thispagestyle{empty}

\section{Introduction}
The idea of \emph{walking to infinity} arises naturally in number theory. Given a set of integers or other discrete points, one may ask whether it is possible to move indefinitely through the set while placing a uniform restriction on the allowed steps. For the ordinary primes, one cannot walk to infinity using steps of uniformly bounded size, since there are arbitrarily large gaps between consecutive primes. 

An analogous question for the Gaussian primes leads to the Gaussian moat problem \cite{GethnerWagonWick1998}, which asks whether one can travel arbitrarily far from the origin through Gaussian primes while keeping the Euclidean length of each step uniformly bounded. These and related questions motivate the study of other ways in which the steps of a walk may be restricted.

A simple operation on an integer is to append digits in some base $b\geq 2$. If the starting integer is in a given sequence, then it is natural to ask if the resulting integer is in the sequence, and if it is, whether this process can be continued to give us an infinite walk along sequence terms. 

Questions of this type are related to earlier work on concatenations of terms in a sequence, by Banks and Luca \cite{BanksLuca2005}. They considered binary linear recurrences in an arbitrary base $b\geq2$ under some mild assumptions, and proved that there are finitely many terms in such a sequence that can be concatenated to give further terms. 

However, in our case, we have the freedom to choose the digits we want to append and are not restricted to only concatenating sequence terms. This notion of \textit{digit-appending walks} was used by Miller et al.\@ \cite{MillerEtAl2024}, who studied digit-appending walks on number theory sequences, including the primes, square-free numbers, and perfect squares.\footnote{The results on the square-free case were refined and optimized by Chen et al.~\cite{Chenetal2026}, who then later learned that line of inquiry had recovered seemingly-forgotten work of Mirsky~\cite{Mirsky1947}.}
For the Fibonacci sequence, Miller, Peng, Popescu, and Wattanawanichkul \cite{MillerEtAl2022} studied walks in base $10$ when the number of digits appended at each step is bounded. 

Kominers \cite{Kominers2025} subsequently studied the Fibonacci problem in an arbitrary base. For every base $b\geq2$, he proved uniform termination bounds for the walks when at most $N$ digits are appended at each step, and extended the method to second-order Lucas sequences. He found that a bound on the number of appended digits eventually forces a bound on the corresponding increase in the sequence index, and used exact identities of these sequences to bound walk lengths. 

We consider an integer sequence $(a_n)_{n\geq0}$ satisfying a constant-depth linear recurrence with constant rational coefficients, whose minimal characteristic polynomial has a unique simple dominant root $\alpha$ lying outside the unit circle. Fix an integer base $b\geq2$. We say that there is a \emph{digit-appending step} from $a_m$ to $a_{m+k}$ if
\begin{equation}\label{eq:digit-appending}
a_{m+k}=b^ta_m+d\sgn(a_m),
\qquad
k,t\in\mathbb{Z}_{\geq1},
\qquad
d\in\mathbb{Z},\quad 0\leq d<b^t.
\end{equation}

The following is a simple example of a sequence for which an infinite digit-appending walk exists (see Example \ref{nonexample} for a nonexample). 

\begin{ex}
Consider the sequence $a_n = b^n-1$, which satisfies the linear recurrence $a_{n+2} = (b+1)a_{n+1}-ba_n$,with $b$ as its unique simple dominant root. Also note that it can be written as $a_{n+1} = ba_n+(b-1)$, so every step in the recurrence is a digit-appending step (satisfies \eqref{eq:digit-appending} with $t=1$ and $d=b-1$). Therefore, an infinite digit-appending walk clearly exists.
\end{ex}

Our approach replaces the exact Fibonacci and Lucas identities by dominant root asymptotics. For bounded digit lengths, this gives analogues of the index-increment bound and rigidity established in \cite[Lemmas 3.2--3.3]{Kominers2025}. If $t\leq N$, then sufficiently far along the recurrence, the index increment $k$ is also bounded. Hence only finitely many pairs $(k,t)$ can occur, and the asymptotic estimate of the sequence forces every sufficiently late digit-appending step to satisfy
\[
\alpha^k = b^t.
\]
Consequently, if $\alpha$ and $b$ are \emph{multiplicatively independent} (that is, if $\alpha^k\neq b^t$ for all positive integers $k$ and $t$), then no infinite bounded digit--appending walk exists. Thus, the bounded case extends the rigidity from Fibonacci and Lucas sequences to general integer linear recurrences with constant coefficients, under our dominant root assumption.

We also consider the case in which the appended digit length is unrestricted. In this case, we show that if $|\alpha|$ and $b$ are multiplicatively independent, then every sufficiently large term has infinitely many outgoing digit-appending steps, and hence that an infinite walk exists.

The following are our main results.

\begin{theorem} \label{inf walk condition}
For each $N\geq 1$, there exists an $M$ such that every digit-appending step \eqref{eq:digit-appending} with $m\geq M$ and $1\leq t\leq N$ satisfies
\[
\alpha^k = b^t.
\]
\end{theorem}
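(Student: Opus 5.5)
The plan is to replace exact identities by the dominant-root asymptotics of $a_n$. Because $\alpha$ is a simple root of the minimal characteristic polynomial and strictly dominates all other roots, we may write
\[
a_n = c\,\alpha^n + O(\beta^n)
\]
with $c\neq 0$ and some $1\le\beta<|\alpha|$ (taking $\beta=1$ if no other root exceeds modulus one, and absorbing polynomial factors from repeated smaller roots). The minimality of the characteristic polynomial is what guarantees $c\neq 0$. Since $a_n$ is real and $\alpha$ is the unique root of maximal modulus, $\alpha$ is real and $c$ is real. Hence $|a_n|\to\infty$, $a_n\neq 0$ for large $n$, and $\sgn(a_n)$ is eventually either constant or alternating according to $\sgn(c\alpha^n)$.

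\textbf{Step 1 (bounding $k$).} From \eqref{eq:digit-appending} with $0\le d<b^t$ we get
\[
b^t|a_m|\le |a_{m+k}|< (b^t+1)|a_m|\le 2b^N|a_m|.
\]
Dividing the asymptotics gives
\[
\frac{a_{m+k}}{a_m}=\alpha^k\,\frac{1+O((\beta/|\alpha|)^{m+k})}{1+O((\beta/|\alpha|)^m)} .
\]
For $m\ge M_0$ the error factor lies in $[1/2,2]$ uniformly in $k\ge 1$. Therefore $|\alpha|^k\le 4b^N$, so $k\le K:=\lfloor \log(4b^N)/\log|\alpha|\rfloor$. Only finitely many pairs $(k,t)$ with $1\le t\le N$ and $1\le k\le K$ remain.

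\textbf{Step 2 (rigidity).} For a fixed admissible pair $(k,t)$, divide the step equation by $a_m$:
\[
\frac{a_{m+k}}{a_m}=b^t+\frac{d\,\sgn(a_m)}{a_m}=b^t+\frac{d}{|a_m|}.
\]
Since $0\le d<b^t\le b^N$, the right side lies in $[b^t,\,b^t+b^N/|a_m|)$, so it tends to $b^t$ as $m\to\infty$. The left side equals $\alpha^k(1+O((\beta/|\alpha|)^m))$, which tends to $\alpha^k$ because $k\le K$ is bounded. So if $\alpha^k\neq b^t$, then
\[
|\alpha^k-b^t|\ge\delta>0
\]
for every one of the finitely many pairs, and for $m$ beyond some $M_{k,t}$ the two sides differ, so no step with this $(k,t)$ occurs. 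Taking $M=\max(M_0,\max_{k,t}M_{k,t})$ finishes the proof. The same step also shows that a pair can occur at large $m$ only if $\alpha^k$ is real and positive, consistent with $\alpha^k=b^t$.

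\textbf{Main obstacle.} The delicate point is the uniformity in Step 1. The error factor must be controlled independently of $k$, which works because $(\beta/|\alpha|)^{m+k}\le(\beta/|\alpha|)^m$. One must also justify $c\neq 0$ and that $\alpha$ is real. I would handle this by noting that complex roots of a real polynomial come in conjugate pairs of equal modulus, so uniqueness of the dominant root forces it to be real. For $c\neq 0$, I would argue that if $c=0$ the sequence would satisfy the recurrence attached to the smaller polynomial obtained by removing the factor $(x-\alpha)$, contradicting minimality.
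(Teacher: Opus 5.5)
Your proposal is correct and follows essentially the same route as the paper: you first bound $k$ using the ratio estimate $a_{m+k}/a_m\approx\alpha^k$, which holds uniformly in $k$ (the paper's Lemma~\ref{k bound}), and then, for the finitely many remaining pairs $(k,t)$, you compare $a_{m+k}/a_m$ with both $\alpha^k$ and $b^t+d/|a_m|$; the only cosmetic difference is that you take a threshold $M_{k,t}$ for each pair and then the maximum, whereas the paper uses the single gap $\min S_N$. One small fix: take any $\beta\in(p,|\alpha|)$ instead of $\beta=1$, because a repeated root of modulus exactly $1$ gives a polynomially growing error term, not an $O(1)$ one.
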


\begin{theorem} \label{unbounded thm}
Suppose that $r$ and $b$ are multiplicatively independent and that the appended digit length is unrestricted. Then there exists $M$ such that for every $m\geq M$, $a_m$ has infinitely many outgoing digit-appending steps. Consequently, an infinite digit-appending walk exists.
\end{theorem}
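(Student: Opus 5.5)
The plan is to reduce the existence of a step to a Kronecker-type density statement for the sequence $k\log r \bmod \log b$, where $r=|\alpha|$. First, $\alpha$ is real: it is the unique dominant root of a polynomial with rational coefficients, so $\bar\alpha$ would be a second dominant root otherwise. Second, $r>1$ by hypothesis. The dominant root asymptotics give
\[
a_n = c\,\alpha^n\,(1+\varepsilon_n), \qquad \varepsilon_n\to 0,
\]
for some real $c\neq 0$. The error is $O(n^{s}(\rho/r)^n)$, where $\rho<r$ bounds the moduli of the other roots; $c\neq0$ because the characteristic polynomial is minimal. Hence there is $M$ such that for $m\geq M$ we have $a_m\neq 0$ and $\sgn(a_m)=\sgn(c\alpha^m)$.

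Fix $m\geq M$. After multiplying \eqref{eq:digit-appending} by $\sgn(a_m)$, a step to $a_{m+k}$ is equivalent to two conditions. The first is $\sgn(a_{m+k})=\sgn(a_m)$. The second is
\[
|a_m|\;\leq\; \frac{|a_{m+k}|}{b^t} \;<\; |a_m|+1 .
\]
I will only use even $k$. Then $\sgn(c\alpha^{m+k})=\sgn(c\alpha^m)$, so the sign condition is automatic once $m+k\geq M$. Taking logarithms, the second condition becomes
\[
(m+k)\log r+\log|c|+\log|1+\varepsilon_{m+k}| - t\log b \in I_m:=\bigl[\log|a_m|,\ \log(|a_m|+1)\bigr).
\]
The interval $I_m$ has fixed positive length $\ell_m=\log(1+1/|a_m|)$. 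Let $J_m$ be the middle half of $I_m$. Choose $K_m$ so that $|\log|1+\varepsilon_{m+k}||<\ell_m/4$ for all $k\geq K_m$. Then it suffices to find infinitely many even $k\geq K_m$ and integers $t$ with
\[
(m+k)\log r+\log|c| - t\log b \in J_m .
\]
Equivalently, writing $\theta=2\log r/\log b$ and $k=2j$, we need the fractional part of
\[
j\theta + \frac{m\log r+\log|c|}{\log b}
\]
to lie in a fixed arc of length $\ell_m/(2\log b)$ of $\mathbb{R}/\mathbb{Z}$.

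Multiplicative independence of $r$ and $b$ says exactly that $\log r/\log b\notin\mathbb{Q}$, so $\theta$ is irrational. By Kronecker's theorem (or Weyl equidistribution), $j\theta \bmod 1$ visits every arc of positive length infinitely often. This yields infinitely many $j$ with $2j\geq K_m$, each giving an integer $t$ satisfying the displayed condition. For these $k$ and $t$, the number $d:=\sgn(a_m)\,(a_{m+k}-b^t a_m)$ is an integer satisfying $0\leq d<b^t$, which is precisely \eqref{eq:digit-appending}. It remains to check that $t\geq 1$. Since $t\log b\approx (m+k)\log r - \log|a_m|\to\infty$ as $k\to\infty$, all but finitely many of the $k$ found have $t\geq1$. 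Thus $a_m$ has infinitely many outgoing steps.

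For the infinite walk, start at $a_M$, take any outgoing step to $a_{M+k_1}$, and note that $M+k_1\geq M$. Repeating the argument from the new term produces an infinite walk by induction.

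The main obstacle I expect is the bookkeeping around the error term and the sign. The target interval $I_m$ has length about $1/|a_m|$, which is tiny but fixed once $m$ is fixed. The argument must therefore let $k\to\infty$ with $m$ held fixed, so that $\varepsilon_{m+k}$ becomes negligible relative to $\ell_m$. The restriction to even $k$ for negative $\alpha$ is what keeps the signs consistent, at the cost of replacing $\log r/\log b$ by $2\log r/\log b$, which is still irrational.
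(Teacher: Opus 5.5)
Your proof is correct and follows essentially the same route as the paper. You fix $m$, use Kronecker density of $k\log_b r$ modulo $1$ (restricting to even $k$ to control the sign) so that $|A|r^{m+k}/b^t$ lands well inside the window $[|a_m|,|a_m|+1)$, and then let $k\to\infty$ with $m$ fixed so that the non-dominant error becomes negligible. The remaining differences are cosmetic: you work on a logarithmic scale with the middle half of $I_m$ and a multiplicative error $(1+\varepsilon_n)$, whereas the paper uses the middle third of the additive window with the bound $|E_n|\le C(qr)^n$ and packages the density step as Lemma~\ref{density}.
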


\section{Preliminaries}
Let $(a_n)_{n\geq0}$ be an integer sequence satisfying a constant-depth linear recurrence with constant rational coefficients, and let $\chi$ be its minimal characteristic polynomial over $\mathbb{Q}$. Let $\lambda_1,\dots,\lambda_s$ be the distinct nonzero roots of $\chi$, with multiplicities $\mu_1,\dots,\mu_s$, respectively. For all sufficiently large $n$, by the generalized Binet theorem \cite[Theorem IV.9]{FlajoletSedgewick2009}, we have
\begin{equation}\label{eq:exp-poly}
a_n=\sum_{i=1}^s P_i(n)\lambda_i^n,
\end{equation}
where each $P_i$ is a polynomial of degree at most $\mu_i-1$.\footnote{A possible zero root of $\chi$ contributes only to finitely many initial terms, which is why we state this expansion for sufficiently large $n$.}

We assume throughout that $\chi$ has a unique dominant root $\alpha$, that $\alpha$ is simple, and that $r:=|\alpha|>1$.
Since $\chi$ has rational coefficients, uniqueness of the dominant root implies that $\alpha$ is real. Let $A$ be the coefficient of $\alpha^n$ in \eqref{eq:exp-poly}. By minimality of $\chi$, we have $A\neq0$. Thus, for all sufficiently large $n$,
\[
a_n=A\alpha^n+
\sum_{\substack{1\leq i\leq s\\\lambda_i\neq\alpha}}
P_i(n)\lambda_i^n.
\]
In addition, we define
\[
p:=\max\bigl(\{|\lambda_i|:\lambda_i\neq\alpha\}\cup\{0\}\bigr),
\]
with the convention that $p = 0$ if there are no other characteristic roots.

\begin{lemma} \label{elementary}
For every $q\in(p/r,1)$ there is a constant $C>0$ such that
\[
a_n = A\alpha^n+E_n,
\qquad\text{with $|E_n|\leq C(qr)^n$}
\]
for all $n\geq0$.
\end{lemma}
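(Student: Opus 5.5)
We define $E_n := a_n - A\alpha^n$ for every $n\geq 0$. The bound then has two parts: for large $n$ we use the generalized Binet expansion \eqref{eq:exp-poly}, and we absorb the finitely many initial indices into the constant. Let $n_0$ be such that \eqref{eq:exp-poly} holds for all $n\geq n_0$. For such $n$ we have
\[
E_n=\sum_{\substack{1\leq i\leq s\\ \lambda_i\neq\alpha}} P_i(n)\lambda_i^n .
\]
If there are no other roots ($p=0$ and the sum is empty), then $E_n=0$ for $n\geq n_0$ and only the initial segment matters.

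Otherwise, each $\lambda_i\neq\alpha$ satisfies $|\lambda_i|\leq p<qr$, because $q>p/r$. Choose $D$ with $|P_i(n)|\leq D(1+n)^{\mu_i-1}$ for all $i$ and $n$, and set $\mu=\max_i \mu_i$. Then
\[
|E_n|\leq sD(1+n)^{\mu-1}p^n = sD\,(1+n)^{\mu-1}\Bigl(\frac{p}{qr}\Bigr)^n (qr)^n .
\]
Since $0\le p/(qr)<1$, the factor $(1+n)^{\mu-1}(p/(qr))^n$ tends to $0$ (polynomial growth against geometric decay), so it is bounded by some constant $C_1$ for all $n$. This gives $|E_n|\leq sDC_1(qr)^n$ for $n\geq n_0$.

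For $0\leq n<n_0$ there are finitely many values. Since $qr>0$ (note $qr>p\geq 0$, and $q>0$ because $q>p/r\geq0$), we can take $C_2=\max_{n<n_0}|E_n|/(qr)^n$. Setting $C=\max(sDC_1,C_2,1)$ gives the claim for all $n\geq 0$.

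The only step needing care is checking that $qr>p$ strictly and that $qr>0$. This is exactly what the hypothesis $q\in(p/r,1)$ provides, and it is why the polynomial factors $P_i(n)$ (from repeated roots) can be absorbed.
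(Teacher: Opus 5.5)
Your proof is correct and follows the paper's route: bound each non-dominant term $P_i(n)\lambda_i^n$ by a polynomial factor times $(|\lambda_i|/(qr))^n(qr)^n$, and use that polynomial growth against geometric decay stays bounded. You also define $E_n:=a_n-A\alpha^n$ explicitly and handle separately the finitely many indices $n<n_0$ where the Binet expansion may fail; the paper's proof leaves this step implicit, and it is what makes the bound hold for all $n\geq 0$ as stated.
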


\begin{proof}
Let $\lambda_i\neq\alpha$ be a non-dominant root of the characteristic polynomial. Since $p/r<q<1$ and $|\lambda_i|/r\leq p/r$, we have $|\lambda_i|/(qr)<1$ and therefore $n^{\deg(P_i(n))}(|\lambda_i|/(qr))^n$ is bounded. Hence,
\[
P_i(n)\lambda_i^n = O\left(n^{\deg(P_i(n))}|\lambda_i|^n\right) = O\left(n^{\deg(P_i(n))}\left(\frac{|\lambda_i|}{qr}\right)^n(qr)^n\right) = O\left((qr)^n\right);
\]
as there are only finitely many non-dominant roots, summing these estimates gives
\[
E_n = \sum_{\lambda_i\neq\alpha}P_i(n)\lambda_i^n = O\left((qr)^n\right).\qedhere
\] 
\end{proof}

The elementary observation in Lemma~\ref{elementary} allows us to prove that for sufficiently large $m$, $\alpha^ka_m$ is a good estimate for $a_{m+k}$. Since the error only depends on the index $m$, the approximation remains valid uniformly even when $k$ varies with $m$.

\begin{lemma} \label{ratio estimate}
Fix $q\in(p/r,1)$. There exist constants $M\geq0$ and $C>0$ such that $a_m\neq0$ and
\begin{equation} \label{ratio 1}
\left|\frac{a_{m+k}}{\alpha^ka_m}-1\right| \leq Cq^m  
\end{equation}
for every $m\geq M$ and $k\geq 0$.
Consequently, for all $\eta\in(0,1)$ there is $M_\eta$ such that
\begin{equation}\label{ratio 2}
(1-\eta)r^k|a_m|\leq |a_{m+k}|\leq (1+\eta)r^k|a_m|  
\end{equation}
for every $m\geq M_\eta$ and every $k\geq0$.
\end{lemma}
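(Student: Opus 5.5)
We plan to apply Lemma~\ref{elementary} at both indices $m$ and $m+k$, and then divide by $\alpha^k a_m$. Fix $q\in(p/r,1)$ and let $C_0$ be the constant from Lemma~\ref{elementary}, so that $a_n=A\alpha^n+E_n$ with $|E_n|\le C_0(qr)^n$ for all $n\ge 0$. Since $A\neq 0$ and $r>1$, we get
\[
|a_m|\ge |A|r^m-C_0(qr)^m=r^m\bigl(|A|-C_0q^m\bigr).
\]
We choose $M$ so large that $C_0q^M\le |A|/2$. Then for $m\ge M$ we have $|a_m|\ge \tfrac{|A|}{2}r^m>0$, and in particular $a_m\neq 0$.

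For \eqref{ratio 1}, the key step is the identity
\[
a_{m+k}-\alpha^k a_m = \bigl(A\alpha^{m+k}+E_{m+k}\bigr)-\alpha^k\bigl(A\alpha^m+E_m\bigr)=E_{m+k}-\alpha^kE_m ,
\]
in which the dominant terms cancel exactly. From the error bound,
\[
|E_{m+k}-\alpha^kE_m|\le C_0(qr)^{m+k}+C_0r^k(qr)^m = C_0 r^{m+k}q^m\bigl(q^k+1\bigr)\le 2C_0r^{m+k}q^m,
\]
where we used $q<1$. Dividing by $|\alpha^k a_m|=r^k|a_m|\ge r^k\tfrac{|A|}{2}r^m$ gives
\[
\left|\frac{a_{m+k}}{\alpha^ka_m}-1\right|\le \frac{4C_0}{|A|}q^m .
\]
This is \eqref{ratio 1} with $C=4C_0/|A|$. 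The point to watch is that the bound does not depend on $k$. That holds because the term $E_{m+k}$ carries the factor $q^{m+k}\le q^m$, so the error is controlled by $m$ alone, which is exactly the uniformity in $k$ claimed.

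For \eqref{ratio 2}, given $\eta\in(0,1)$ we choose $M_\eta\ge M$ with $Cq^{M_\eta}\le\eta$. For $m\ge M_\eta$, \eqref{ratio 1} gives
\[
\left|\frac{a_{m+k}}{\alpha^k a_m}\right|\in[1-\eta,1+\eta].
\]
Multiplying through by $r^k|a_m|$ yields \eqref{ratio 2}.

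There is no real obstacle here. The only delicate points are ensuring $a_m\neq 0$ before dividing, and checking that the constants do not depend on $k$. Both are handled by the lower bound on $|a_m|$ and the factor $q^k\le 1$.
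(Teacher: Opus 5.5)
Your proof is correct and is essentially the paper's argument. The paper writes $a_n=A\alpha^n(1+\varepsilon_n)$ with $\varepsilon_n=E_n/(A\alpha^n)$, so its numerator $\varepsilon_{m+k}-\varepsilon_m$ and its bound $|1+\varepsilon_m|\geq 1/2$ are your identity $a_{m+k}-\alpha^k a_m=E_{m+k}-\alpha^kE_m$ and your bound $|a_m|\geq\tfrac{|A|}{2}r^m$ after normalizing; it reaches the same constant $4C_1=4C_0/|A|$ and derives the two-sided estimate by the same reverse triangle inequality.
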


\begin{proof}
By Lemma \ref{elementary}, we may write $a_n=A\alpha^n(1+\varepsilon_n)$ such that $|\varepsilon_n|\leq C_1q^n$ for some $C_1>0$. We pick an $M$ sufficiently large so that $|\varepsilon_n|\leq 1/2$ for all $n\geq M$. Then, for $m\geq M$,
\begin{align*}
\left|\frac{a_{m+k}}{\alpha^ka_m}-1\right| = \left|\frac{1+\varepsilon_{m+k}}{1+\varepsilon_m}-1\right| = \left|\frac{\varepsilon_{m+k}-\varepsilon_m}{1+\varepsilon_m}\right| \leq \frac{|\varepsilon_{m+k}|+|\varepsilon_m|}{\left|1-|\varepsilon_m|\right|} \leq 2\left(|\varepsilon_{m+k}|+|\varepsilon_m|\right) \leq 4C_1q^m.
\end{align*}
We can let $C = 4C_1$ and have therefore proven \eqref{ratio 1}.

To prove \eqref{ratio 2}, we pick $M_\eta$ sufficiently large so that for all $m\geq M_\eta$, we have $Cq^m\leq \eta$. Then, by the reverse triangle inequality,
\[
\left|\frac{|a_{m+k}|}{r^k|a_m|}-1\right|\leq\left|\frac{a_{m+k}}{\alpha^ka_m}-1\right|\leq\eta,
\]
and therefore
\[
(1-\eta)r^k|a_m|\leq |a_{m+k}|\leq (1+\eta)r^k|a_m|.\qedhere
\]
\end{proof}

\section{Bounded appended digit lengths}
If the digit length in \eqref{eq:digit-appending} is bounded, that is, $t\leq N$ for some $N\geq 1$, we may use Lemma \ref{ratio estimate} to prove that the step size $k$ is also bounded. This yields the following analogue of \cite[Lemma 3.2]{Kominers2025}.

\begin{lemma} \label{k bound}
For every $N\geq1$, there exists an $M$ such that for all $m\geq M$, every digit-appending step \eqref{eq:digit-appending} with $1\leq t\leq N$ satisfies $r^k < 4b^N$. In particular, $k < \log_r4b^N$.
\end{lemma}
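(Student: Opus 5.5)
We compare the two size estimates available for $|a_{m+k}|$: the lower bound from \eqref{ratio 2} in Lemma~\ref{ratio estimate}, and the upper bound coming from the digit-appending relation \eqref{eq:digit-appending} itself. The plan is to fix $\eta=1/2$ and let $M_{1/2}$ be as in Lemma~\ref{ratio estimate}. Then for $m\geq M_{1/2}$ we have $a_m\neq 0$ and, for every $k\geq 0$,
\[
|a_{m+k}|\geq \tfrac12 r^k|a_m|.
\]

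\textbf{Upper bound from the appended digits.} From \eqref{eq:digit-appending} with $0\leq d<b^t$ and $\sgn(a_m)=\pm1$, the triangle inequality gives
\[
|a_{m+k}|\leq b^t|a_m|+d<b^t|a_m|+b^t .
\]
Since $a_m$ is a nonzero integer, $|a_m|\geq 1$, so $b^t\leq b^t|a_m|$. Hence
\[
|a_{m+k}|<2b^t|a_m|\leq 2b^N|a_m|
\]
whenever $1\leq t\leq N$. (In fact $d\,\sgn(a_m)$ has the same sign as $b^ta_m$, so $|a_{m+k}|=b^t|a_m|+d$ exactly, but the inequality is all we need.)

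\textbf{Combining the two bounds.} Putting the estimates together,
\[
\tfrac12 r^k|a_m|<2b^N|a_m|.
\]
Dividing by $|a_m|>0$ gives $r^k<4b^N$. Since $r>1$, taking $\log_r$ yields $k<\log_r 4b^N$. It therefore suffices to take $M=M_{1/2}$, which does not depend on $N$ at all; the dependence on $N$ enters only through the bound on $k$.

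The only delicate point is to use $|a_m|\geq1$ so that the additive term $d<b^t$ is absorbed into a multiplicative factor. This requires $a_m\neq0$, and Lemma~\ref{ratio estimate} guarantees that for $m\geq M$. The rest is direct.
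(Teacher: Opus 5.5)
Your proof is correct and follows the paper's argument: both take $\eta=1/2$ in \eqref{ratio 2} for the lower bound, use $d<b^t$ and $|a_m|\geq 1$ to get $|a_{m+k}|<2b^N|a_m|$, and combine the two bounds to obtain $r^k<4b^N$. Your side remark is also correct: $M=M_{1/2}$ works uniformly in $N$.
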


\begin{proof}
First, observe $|a_{m+k}| = |b^ta_m+d\sgn(a_m)|  = |(b^t|a_m|+d)\sgn(a_m) | = b^t|a_m|+d$. Choose $M$ sufficiently large so that \eqref{ratio 2} is satisfied with $\eta = 1/2$ and $a_m\neq 0$ for every $m\geq M$. Then, we have
\[
|a_{m+k}| = b^t|a_m|+d<b^N|a_m|+b^N\leq2b^N|a_m|
\]
and also
\[
|a_{m+k}|\geq \frac{1}{2}r^k|a_m|.
\]
Combining the two preceding inequalities gives $r^k<4b^N$; the bound on $k$ follows immediately.
\end{proof}

Lemma~\ref{k bound} reduces the problem of walking along the sequence via bounded digit--appending steps to finitely many pairs $(k,t)$, allowing us to compare digit-appending \eqref{eq:digit-appending} with the asymptotic approximation $a_{m+k}\sim \alpha^ka_m$. We now prove Theorem \ref{inf walk condition}.

\begin{proof}[Proof of Theorem \ref{inf walk condition}]
Let 
\[
S_N := \{|\alpha^k-b^t|:1\leq k\leq \left\lceil \log_r4b^N \right \rceil, 1\leq t\leq N, \alpha^k\neq b^t\}.
\]
By Lemma \ref{k bound}, there exists $M_0$ so that every bounded digit--appending step starting at $m\geq M_0$ satisfies
\[
1\leq k\leq \left\lceil \log_r4b^N \right \rceil.
\]
If $S_N$ is empty, then $\alpha^k = b^t$ for all $1\leq k\leq \left\lceil \log_r4b^N \right \rceil$ and $1\leq t\leq N$. Hence, every bounded digit--appending step starting at $m\geq M_0$ satisfies $\alpha^k = b^t$. Suppose now that $S_N$ is non-empty. Since $k$ and $t$ are bounded integers,  $S_N$ is finite and (thus) has a minimum. 

Fix $q\in (p/r,1).$
By Lemma \ref{ratio estimate}, there exists $M_1\geq M_0$ and $C>0$ such that for all $m\geq M_1$ and $k\geq0$,  
\[
\left|\frac{a_{m+k}}{a_m}-\alpha^k\right|=|\alpha^k|\left|\frac{a_{m+k}}{\alpha^ka_m}-1\right|\leq Cr^kq^m.
\]
Since we picked $M_1\geq M_0$, every digit-appending step starting at $m\geq M_1$ satisfies $1\leq k\leq \left\lceil \log_r4b^N \right \rceil$, thus
\[
\left|\frac{a_{m+k}}{a_m}-\alpha^k\right|\leq Cr^{\left\lceil \log_r4b^N \right \rceil}q^m.
\]
Since $0<q<1$, there exists some $M_2\geq M_1$ such that for all $m\geq M_2$, 
\[
Cr^{\left\lceil \log_r4b^N \right \rceil}q^m< \frac{\min S_N}{2}.
\]
In addition, because of Lemma \ref{elementary}, there must exist an $M_3\geq M_2$ such that for all $m\geq M_3$, 
\[
\frac{b^N}{|a_m|}<\frac{\min S_N}{2}.
\]

Now suppose a bounded digit--appending step starts at some $m\geq M_3$. We have
\[
\frac{a_{m+k}}{a_m} = b^t+d\frac{\sgn(a_m)}{a_m} = b^t+\frac{d}{|a_m|}.
\]
Thus,
\[
\frac{a_{m+k}}{a_m}-b^t = \left|\frac{a_{m+k}}{a_m}-b^t\right| = \frac{d}{|a_m|}<\frac{b^t}{|a_m|}\leq \frac{b^N}{|a_m|}<\frac{\min S_N}{2}.
\]
Then, by the triangle inequality,
\[
|\alpha^k-b^t|\leq \left|\alpha^k-\frac{a_{m+k}}{a_m}\right|+\left|\frac{a_{m+k}}{a_m}-b^t\right|<\frac{\min S_N}{2}+\frac{\min S_N}{2} = \min S_N
\]
However, if $\alpha^k\neq b^t$, we have $|\alpha^k-b^t|\in S_N$, and therefore
\[
|\alpha^k-b^t|\geq \min S_N,
\]
giving a contradiction.
\end{proof}

Using Theorem~\ref{inf walk condition}, we immediately obtain a sufficient condition for when infinite walks are impossible. 

\begin{cor} \label{suff cond}
If $\alpha$ and $b$ are multiplicatively independent, then there is no infinite digit-appending walk with uniformly bounded appended-digit lengths.
 \end{cor}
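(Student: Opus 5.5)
We argue by contradiction, using Theorem~\ref{inf walk condition}. Suppose there is an infinite digit-appending walk $a_{m_0}\to a_{m_1}\to a_{m_2}\to\cdots$ in which every step appends at most $N$ digits, for some fixed $N\geq1$. Each step has index increment $k_j=m_{j+1}-m_j\geq1$, so the indices satisfy $m_j\geq m_0+j$. In particular $m_j\to\infty$.

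Let $M$ be the constant that Theorem~\ref{inf walk condition} provides for this $N$. Choose $j$ large enough that $m_j\geq M$. The step from $a_{m_j}$ to $a_{m_{j+1}}$ is a digit-appending step \eqref{eq:digit-appending} with starting index $m_j\geq M$, index increment $k=k_j\geq1$, and $1\leq t\leq N$. Theorem~\ref{inf walk condition} then gives $\alpha^{k}=b^{t}$ for some positive integers $k$ and $t$. This is exactly what multiplicative independence of $\alpha$ and $b$ rules out, so no such infinite walk exists.

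No step here is a real obstacle; the substance is already in Theorem~\ref{inf walk condition}. The only points needing care are two. First, a walk has infinitely many steps and $k\geq1$, so the indices must eventually exceed any threshold $M$. Second, the theorem applies uniformly to all steps starting beyond $M$, whatever their particular $(k,t)$. It is also worth noting that $\alpha$ may be negative. The definition of multiplicative independence used here is $\alpha^k\neq b^t$, not a condition on $|\alpha|$, so the contradiction is obtained directly and no sign analysis is needed.
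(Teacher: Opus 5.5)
Your proposal is correct and is essentially the paper's argument: the paper just cites the contrapositive of Theorem~\ref{inf walk condition}. You have also made explicit the step it leaves implicit, namely that the strictly increasing indices of an infinite walk eventually pass the threshold $M$, so some step yields $\alpha^k=b^t$, which contradicts multiplicative independence.
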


\begin{proof}
The corollary follows immediately from the contrapositive of Theorem \ref{inf walk condition}.
\end{proof}

However, the converse is not true. We now construct a simple example where $\alpha$ and $b$ are multiplicatively dependent, but an infinite digit-appending walk does not exist.

\begin{ex} \label{nonexample}
Let $b\geq2$ and consider the sequence $a_n = b^n+1$, which satisfies the linear recurrence $a_{n+2}=(b+1)a_{n+1}-ba_n$: an infinite digit-appending walk along this sequence does not exist in base $b$. 
\end{ex}

\begin{proof}
Suppose there exists a digit-appending step 
\[
a_{m+k} = b^ta_m+d \qquad 0\leq d<b^t
\]
with $m\geq1$. Rearranging gives
\[
b^{m+k}+1 = b^t(b^m+1)+d = b^{m+t}+b^t+d,
\]
and therefore $d \equiv 1\pmod{b^t}$. Since $0\leq d<b^t$, we must then have $d = 1$ and thus $$b^{m+k} = b^{m+t}+b^t.$$ Considering both sides modulo $b^{m+t}$, the preceding identity gives $b^t \equiv 0 \pmod{b^{m+t}}$---a contradiction. Thus no digit-appending step from $a_m$ with $m\geq1$ exists, so an infinite digit-appending walk clearly cannot exist.
\end{proof}

\begin{cor}\label{alpha is integer}
If there exists an infinite digit-appending walk whose appended digit lengths are uniformly bounded, then the dominant root $\alpha$ must be an integer.
\end{cor}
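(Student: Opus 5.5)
By Corollary~\ref{suff cond}, the existence of an infinite digit-appending walk with appended digit lengths bounded by some $N$ forces $\alpha$ and $b$ to be multiplicatively dependent. The plan is to extract one concrete relation $\alpha^k=b^t$ with $k,t\geq1$ from the walk itself and then conclude with an algebraic-integer argument. Using the walk directly gives a bit more than Corollary~\ref{suff cond}: it produces an explicit pair $(k,t)$ rather than mere dependence.

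\textbf{Step 1: obtain a relation.} Since the walk is infinite, its indices $m_0<m_1<m_2<\cdots$ are strictly increasing because each $k\geq1$. So eventually $m_j\geq M$, where $M$ is the constant from Theorem~\ref{inf walk condition} for this $N$. Any single later step $a_{m_j}\to a_{m_{j+1}}$ has $1\leq t\leq N$ and $k\geq1$, and Theorem~\ref{inf walk condition} then gives
\[
\alpha^k=b^t .
\]

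\textbf{Step 2: show $\alpha$ is rational.} The number $\alpha$ is real (as noted in the Preliminaries) and is a root of $x^k-b^t\in\mathbb{Z}[x]$, so it is an algebraic integer. Next, let $\beta$ be any Galois conjugate of $\alpha$, i.e.\ any root of the minimal polynomial $f$ of $\alpha$ over $\mathbb{Q}$. Applying the corresponding embedding to $\alpha^k=b^t$ gives $\beta^k=b^t$, so $|\beta|=b^{t/k}=r$.

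We also need $f\mid\chi$. This holds because $\chi\in\mathbb{Q}[x]$ has $\alpha$ as a root. Hence every conjugate $\beta$ is a root of $\chi$ of absolute value $r$. Since $\alpha$ is the \emph{unique} root of $\chi$ of maximal modulus, $\beta=\alpha$. So $\alpha$ has no conjugates other than itself, $\deg f=1$, and $\alpha\in\mathbb{Q}$.

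\textbf{Step 3: conclude.} A rational algebraic integer lies in $\mathbb{Z}$, so $\alpha$ is an integer. Note that $\alpha$ may be negative, e.g.\ $\alpha=-b$ with $k$ even; this is harmless for the conclusion.

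\textbf{Main obstacle.} The only delicate point is Step 2, which must correctly use the uniqueness of the dominant root together with $f\mid\chi$. Without uniqueness, a number such as $\sqrt{2}$, whose conjugate $-\sqrt{2}$ has the same modulus, could satisfy $\alpha^2=2$ while being irrational. The hypothesis that $\alpha$ is the unique root of maximal modulus rules this out.
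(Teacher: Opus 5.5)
Your proposal is correct and follows essentially the same route as the paper: extract $\alpha^k=b^t$ from Theorem~\ref{inf walk condition} using the strictly increasing indices, then use the fact that every conjugate of $\alpha$ is a root of $\chi$ with the same modulus to contradict uniqueness of the dominant root unless $\alpha\in\mathbb{Q}$. The only cosmetic difference is the last step, where you cite ``a rational algebraic integer is an integer'' and the paper gives the equivalent $\gcd$ argument for $\alpha=e/f$.
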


\begin{proof}
Since the indices in an infinite digit-appending walk strictly increase, Theorem~\ref{inf walk condition} gives positive integers $k,t$
such that $\alpha^k=b^t$.

As noted above, as $\alpha$ is the unique dominant root of the recurrence's characteristic polynomial, it must be real. 
If $\alpha$ is irrational, then let $\alpha^*$ be one of its algebraic conjugates; since the characteristic polynomial has
rational coefficients, $\alpha^*$ is also one of its roots. Conjugating $\alpha^k=b^t$ gives $|\alpha^*|^k=b^t =|\alpha|^k $. Thus, $\alpha$ and $\alpha^*$ have the same modulus, contradicting the uniqueness of the dominant root. In addition, $\alpha$ cannot be of the form $e/f$ with $\gcd(e,f) = 1$, because then $\gcd(e^k,f^k) = 1$ and hence no power of $\alpha$ will be an integer. Therefore, we see that $\alpha$ must in fact be an integer.
\end{proof}

The characteristic polynomial of the Fibonacci sequence has dominant root $\varphi$, which is irrational. Therefore, Corollary~\ref{alpha is integer} rules out infinite digit-appending walks with uniformly bounded appended digit lengths in every base, consistent with Kominers's results \cite{Kominers2025} and those of Miller, Peng, Popescu, and Wattanawanichkul \cite{MillerEtAl2022}. 

\section{Unbounded appended digit lengths}

We now consider the case where $t$ is unbounded. With unbounded digit--appending walks, the argument in the previous section no longer applies because both $k$ and $t$ may vary through unbounded sets. In this case, as we show next, infinite walks are actually possible when $\alpha$ and $b$ are multiplicatively independent; indeed, multiplicative independence directly aids in constructing the walk.

We first prove a useful density argument.
\begin{lemma} \label{density}
If $r>1$ and $b\geq2$ are multiplicatively independent and $I\subset(0,\infty)$ is a non-empty open interval, there are infinitely many pairs of positive integers $(k,t)$ such that
\[
\frac{r^k}{b^t}\in I.
\]
In addition, the pairs may also be chosen with $k$ even.
\end{lemma}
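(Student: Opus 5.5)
Take logarithms and reduce to a statement about the fractional parts of multiples of an irrational number. Set $\theta := \log r/\log b$; since $r>1$ and $b\geq 2$, we have $\theta>0$. Write $I=(u,v)$ with $0<u<v$. Then $r^k/b^t\in I$ is equivalent to
\[
k\theta - t \in \bigl(\log_b u,\ \log_b v\bigr) =: J,
\]
which is a nonempty open interval of real numbers.

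The key point is that multiplicative independence makes $\theta$ irrational. If $\theta=t/k$ with positive integers $k,t$, then $r^k=b^t$, which is excluded. The value $\theta\le 0$ is impossible since $r>1$. This is exactly where the hypothesis is used. (The definition in the introduction speaks of $\alpha^k\neq b^t$, but Lemma~\ref{density} is stated for $r$, so I use it directly for $r$.)

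For the even-$k$ refinement, apply the same argument to $2\theta$, which is also irrational. Then $k=2k'$, and we need $k'(2\theta) - t\in J$. So it suffices to prove the following: for an irrational $\beta>0$ and a nonempty open interval $J$, there are infinitely many pairs of positive integers $(k,t)$ with $k\beta - t\in J$. By the Kronecker/Weyl equidistribution theorem (or just Dirichlet's pigeonhole argument, which gives density of $\{k\beta\}$ in $[0,1)$), the set of fractional parts $\{k\beta\}_{k\ge 1}$ is dense in $[0,1)$. Moreover, infinitely many $k$ have fractional part in any given open subinterval, since a dense set of points of an irrational rotation visits every open set infinitely often; alternatively, one can apply density to the tail $k\ge K$.

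Now I fix a subinterval of $J$ of length less than $1$, say $(c,c+\delta)$, and set $n:=\lfloor c\rfloor$. I also take a slightly smaller interval to avoid the endpoint issue at integers: choose $(c',c'+\delta')\subset(c,c+\delta)$ with $(c'-\lfloor c'\rfloor, c'-\lfloor c'\rfloor+\delta')\subset(0,1)$. For each of the infinitely many $k$ with $\{k\beta\}$ in that shifted interval, put $t:=\lfloor k\beta\rfloor - \lfloor c'\rfloor$; then $k\beta - t\in J$. I still need $t\geq 1$, and this holds for all large $k$ because $k\beta\to\infty$. Infinitely many distinct $k$ give infinitely many distinct pairs.

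The main (though mild) obstacle is this bookkeeping: ensuring $t$ is a positive integer and that infinitely many $k$ work, rather than merely one. The irrationality of $\theta$ itself is immediate from the hypothesis.
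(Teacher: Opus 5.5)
Your proposal is correct and follows essentially the same route as the paper. Both arguments pass to $k\log_b r - t$, use the irrationality of $\log_b r$ (which you derive explicitly from $r^k\neq b^t$, whereas the paper only asserts it), apply Kronecker density on a tail $k\ge K$ to a subinterval lying inside a single unit interval $(n,n+1)$, and set $t=\lfloor k\log_b r\rfloor - n$, which is positive for large $k$; the even-$k$ case via $2\theta$ (equivalently $r^2$ and $b$) is also exactly the paper's reduction.
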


\begin{proof}
We let $I = (u,v)$. Since $b>1$, it is sufficient to prove that we can find infinitely many pairs $(k,t)$ for which
\[
k\log_br-t\in (\log_bu,\log_bv).
\]
We pick an $x\in (\log_bu,\log_bv)\setminus\mathbb{Z}$ and fix a non-empty open interval
\[
(y,z)\subset (\log_bu,\log_bv) \cap (\lfloor x\rfloor,\lfloor x\rfloor+1).
\]
Therefore, $(y-\lfloor x\rfloor,z-\lfloor x\rfloor)\subset (0,1)$.

Since $\log_br\notin \mathbb{Q}$, by Kronecker's approximation theorem (see for example \cite[Chapter 1, Example 2.1]{KuipersNiederreiter1974}), for $K\geq1$, the set
\[
\{\{k\log_br\}:k\geq K\},
\]
where $\{k\log_br\}$ denotes the fractional part of $k\log_br$, is dense in $[0,1]$. Hence, there are arbitrarily large $k$ for which 
\[
\{k\log_br\}\in (y-\lfloor x\rfloor, z-\lfloor x\rfloor).
\]
For such $k$, we set 
\[
t = \lfloor k\log_br\rfloor-\lfloor x\rfloor.
\]
Thus, for sufficiently large $k$ we have $t\geq 1$ and
\[
k\log_br-t = \{k\log_br\}+\lfloor x\rfloor\in(y,z)\subset(\log_bu,\log_bv).
\]
Since $r^2$ and $b$ are also multiplicatively independent, the same argument applied to $r^2$ shows that the pairs can be chosen with $k$ even. 
\end{proof}

Using Lemma \ref{density}, we can now prove the existence of infinite walks in the case where $t$ is unbounded (Theorem \ref{unbounded thm}).

\begin{proof}[Proof of Theorem \ref{unbounded thm}]
We fix $q\in(p/r,1)$ and write $a_n = A\alpha^n+E_n$ with $|E_n|\leq C(qr)^n$ for some $C>0$. We have, for sufficiently large $M$, $a_n\neq0$ and $\sgn(a_n) = \sgn(A\alpha^n)$ for every $n\geq M$. We also fix $m\geq M$ and consider the open interval
\[
I = \left(\frac{|a_m|+\frac{1}{3}}{|A|r^m},\frac{|a_m|+\frac{2}{3}}{|A|r^m}\right).
\]

By Lemma \ref{density}, there exists positive integers $k_1<k_2<\cdots$ and corresponding positive integers $t_1,t_2,\dots$ such that 
\[
\frac{r^{k_i}}{b^{t_i}} \in I
\]
for every $i$ (if $\alpha<0$, we specifically choose even $k_i$). Equivalently,

\begin{equation}\label{unbounded ineq 1}
|a_m|+\frac{1}{3}<\frac{|A|r^{m+k_i}}{b^{t_i}}<|a_m|+\frac{2}{3}.
\end{equation}
In addition, by the reverse triangle inequality,
\[
\left|\frac{|a_{m+k_i}|}{b^{t_i}}-\frac{|A|r^{m+k_i}}{b^{t_i}}\right| \leq \frac{|E_{m+k_i}|}{b^{t_i}} \leq
C(qr)^{m+k_i}\frac{1}{b^{t_i}} = C(qr)^m q^{k_i} \frac{r^{k_i}}{b^{t_i}} < C(qr)^m q^{k_i}\frac{|a_m|+\frac{2}{3}}{|A|r^m}.
\]
Since $0<q<1$, we can pick $i$ sufficiently large so that $\left|\frac{|a_{m+k_i}|}{b^{t_i}}-\frac{|A|r^{m+k_i}}{b^{t_i}}\right|<\frac{1}{3}$. This, together with \eqref{unbounded ineq 1}, gives
\[
b^{t_i}|a_m|<|a_{m+k_i}|<b^{t_i}(|a_m|+1).
\]

Let 
\[
d_i = |a_{m+k_i}|-b^{t_i}|a_m|
\]
and note that $0<d_i<b^{t_i}$. If $\alpha>0$, $a_{m+k_i}$ and $a_{m}$ have the same sign. The same is true when $\alpha<0$, since we have picked even $k_i$ for every $i$. Therefore,
\[
a_{m+k_i} = b^{t_i}a_m+d_i\sgn(a_m)
\]
and we have found a digit-appending step starting at $a_m$. Since all sufficiently large $i$ work, there are in fact infinitely many such steps. For every $m\geq M$, the digit-appending step leads to a term with some index $m+k_i$, which is once again greater than $M$, so choosing steps inductively gives an infinite digit-appending walk. 
\end{proof}

The following is a simple example of the construction.

\begin{ex}
Let $b=10$ and consider the sequence $a_n=2^n$, which satisfies the recurrence $a_{n+1}=2a_n$. Since this recurrence has the unique simple dominant root $2$, and $2$ and $10$ are multiplicatively independent, Theorem \ref{unbounded thm} applies and infinitely many infinite digit-appending walks exist along this sequence.
\end{ex}

\section*{Acknowledgements}
This work was supported by NSF Grant DMS2341670. This work was carried out as part of the 2026 Polymath Jr.\@ program, and the authors thank its organizers for coordinating the program. 

This work was conducted while Kominers was visiting the Technological Innovation, Entrepreneurship, and Strategic Management (TIES) Group at the MIT Sloan School of Management; he greatly appreciates their hospitality.

\bibliography{main}{}

\providecommand{\bysame}{\leavevmode\hbox to3em{\hrulefill}\thinspace}
\providecommand{\MR}{\relax\ifhmode\unskip\space\fi MR }
\providecommand{\MRhref}[2]{%
  \href{http://www.ams.org/mathscinet-getitem?mr=#1}{#2}
}
\providecommand{\href}[2]{#2}
\begin{thebibliography}{1}

\bibitem{BanksLuca2005}
William~D. Banks and Florian Luca, \emph{Concatenations with binary recurrent
  sequences}, Journal of Integer Sequences \textbf{8} (2005), Article 05.1.3.

\bibitem{Chenetal2026}
Evan Chen, Chris Cummins, Ben Eltschig, Dejan Grubisic, Leopold Haller, Letong
  Hong, Andranik Kurghinyan, Kenny Lau, Hugh Leather, Seewoo Lee, Aram
  Markosyan, Ken Ono, Manooshree Patel, Gaurang Pendharkar, Vedant Rathi, Alex
  Schneidman, Volker Seeker, Shubho Sengupta, Ishan Sinha, Jimmy Xin, and
  Jujian Zhang, \emph{Dead ends in square-free digit walks},
  arXiv.org/2602.05095, 2026.

\bibitem{FlajoletSedgewick2009}
Philippe Flajolet and Robert Sedgewick, \emph{Analytic combinatorics},
  Cambridge University Press, 2009.

\bibitem{GethnerWagonWick1998}
Ellen Gethner, Stan Wagon, and Brian Wick, \emph{A stroll through the gaussian
  primes}, The American Mathematical Monthly \textbf{105} (1998), no.~4,
  327--337.

\bibitem{Kominers2025}
Scott~Duke Kominers, \emph{Uniform bounds for digit-appending {F}ibonacci
  walks}, The Fibonacci Quarterly (2026), forthcoming.

\bibitem{KuipersNiederreiter1974}
Lauwerens Kuipers and Harald Niederreiter, \emph{{Uniform Distribution of
  Sequences}}, John Wiley \& Sons, 1974.

\bibitem{MillerEtAl2024}
Steven~J. Miller, Fei Peng, Tudor Popescu, Joshua~M. Siktar, and Nawapan
  Wattanawanichkul, \emph{Walking to infinity along some number theory
  sequences}, INTEGERS \textbf{24} (2024).

\bibitem{MillerEtAl2022}
Steven~J. Miller, Fei Peng, Tudor Popescu, and Nawapan Wattanawanichkul,
  \emph{Walking to infinity on the {F}ibonacci sequence}, The Fibonacci
  Quarterly \textbf{60} (2022), no.~5, 293--298.

\bibitem{Mirsky1947}
L.~Mirsky, \emph{Note on an asymptotic formula connected with $r$-free
  integers}, The Quarterly Journal of Mathematics \textbf{os-18} (1947), no.~1,
  178--182.

\end{thebibliography}
\bibliographystyle{amsplain}

\end{document}